\theoremstyle{plain}
\newtheorem{theorem}{Theorem}[section]
\newtheorem{notation}[theorem]{Notation}
\newtheorem{lemma}[theorem]{Lemma}
\theoremstyle{definition}
\newtheorem{remark}[theorem]{Remark}
\newtheorem{definition}[theorem]{Definition}
\newtheorem{corollary}[theorem]{Corollary}
\newtheorem{example}[theorem]{Example}
\newcommand{\N}{\mathbb{N}}
\newcommand{\norm}[1]{\left\lVert#1\right\rVert}
\newcommand{\Fix}{\mathrm{Fix}\,}
\newcommand{\proj}{\textup{proj}}
\newcommand{\Id}{\mathrm{Id}}
\begin{document}

\title{On the convergence of algorithms with Tikhonov regularization terms  \thanks{2010 Mathematics Subject Classification: 47J25, 47H09, 47H05, 03F10. Keywords: Fixed points of nonexpansive mappings, Tikhonov regularization, splitting algorithms, metastability, asymptotic regularity.}}

\author{Bruno Dinis${}^{a}$ and Pedro Pinto${}^{b}$\\[2mm]
	\footnotesize ${}^{a}$ Departamento de Matem\'atica, Faculdade de Ci\^encias da
	Universidade de Lisboa,\\ 
	\footnotesize Campo Grande, Edif\'icio~C6, 1749-016~Lisboa, Portugal\\
	\footnotesize E-mail:  \protect\url{bmdinis@fc.ul.pt}\\[ 2mm]
	\footnotesize ${}^{b}$ Department of Mathematics, Technische Universit{\"a}t Darmstadt,\\ 
	\footnotesize Schlossgartenstrasse 7, 64289 Darmstadt, Germany \\
	\footnotesize E-mail:  \protect\url{pinto@mathematik.tu-darmstadt.de}
}
\maketitle

\begin{abstract}
We consider the strongly convergent modified versions of the Krasnosel'ski\u{\i}-Mann, the forward-backward and the Douglas-Rachford algorithms with Tikhonov regularization terms, introduced by Radu Bo\c{t}, Ern\"{o} Csetnek and Dennis Meier. We obtain quantitative information for these modified iterations, namely rates of asymptotic regularity and metastability. Furthermore, our arguments avoid the use of sequential weak compactness and use only a weak form of the projection argument.
\end{abstract}

\section{Introduction}

In nonlinear analysis one is often confronted with the need to find zeros of a sum of monotone operators. Two well-known iterative splitting methods to approximate such zeros are the forward-backward and the Douglas-Rachford algorithms (see \cite{BC(11)}). The former, weakly converges to a zero of the sum of a multi-valued maximal monotone operator with a single-valued cocoercive operator, while the latter weakly converges to a zero of the sum of two maximal monotone operators. These convergence results stem from the weak convergence of the Krasnosel'ski\u{\i}-Mann ($\mathsf{KM}$) iteration to a fixed point of a nonexpansive map $T$ in a Hilbert space $H$:
\begin{equation}\tag{$\mathsf{KM}$}
x_{n+1}=x_n+\lambda_n(T(x_n)-x_n),
\end{equation} 
with $x_0 \in H$  a starting point and $(\lambda_n) \subset [0,1]$ a sequence of real numbers. One way to obtain strong convergence is to impose stronger conditions on the operators, such as strong monotonicity or strong convexity. However, the application to certain cases may be unfeasible, as the stronger conditions may prove to be too restrictive.  
The proximal-Tikhonov algorithm (see \cite{LM(96)}) was introduced as an alternative way to obtain, under mild conditions, strong convergence to a zero of a maximal monotone operator $\mathsf{A}$. This algorithm is based on the proximal point algorithm \cite{Martinet,Rockafellar76} and, in a first step, switches from the operator $\mathsf{A}$ to the operator $\mathsf{A}+\mu_n \Id$, where $(\mu_n)$ is a sequence of nonnegative real numbers. As the original proximal point algorithm, in general, is only weakly convergent, the added term $\mu_n \Id$ is crucial in guaranteeing the strong convergence of the iteration.   
Motivated by this method, in \cite{Botetal} Radu Bo\c{t}, Ern\"{o} Csetnek and Dennis Meier considered the ($\mathsf{KM}$) iteration with Tikhonov regularization terms as follows:
\begin{equation}\tag{$\mathsf{T\textup{-}KM}$}\label{mKM}
x_{n+1}=\beta_n x_n+\lambda_n(T(\beta_nx_n)-\beta_nx_n),
\end{equation}
where $x_0 \in H$  is the starting point, $(\lambda_n), (\beta_n)$ are sequences of positive numbers and $T:H \to H$ is a nonexpansive mapping. Bo\c{t} \emph{et al.} obtained a strong convergence result for \eqref{mKM} and for similar modifications of the forward-backward and the Douglas-Rachford algorithms.

In this paper we obtain rates of asymptotic regularity and metastability for these modified iterations through a quantitative analysis of the proofs in \cite{Botetal}. As such we are able to obtain quantitative versions of the convergence results by Bo\c{t} \emph{et al.}. Similarly to \cite{FFLLPP(19),DP(21),DP(ta),PP(ta)}, our analysis bypasses the use of sequential weak compactness and only relies on a weak form of the metric projection argument (as explained in \cite{K(11),FFLLPP(19)}). Our results are guided by proof theoretical techniques (in the context of the proof mining program \cite{K(08),K(17)}), however no special knowledge of mathematical logic is required to read this paper.

\section{Preliminaries}\label{sectionPrelim}

Consider $H$ a real Hilbert space with inner product $\langle \cdot, \cdot \rangle$ and norm $\norm{\cdot}$. We start by recalling some notions and properties concerning operators on Hilbert spaces. An operator $\mathsf{A}:H \rightrightarrows H$  is said to be \emph{monotone} if and only if whenever $(x,y)$ and $(x',y')$ are elements of the graph of $\mathsf{A}$, it holds that $\langle x-x',y-y'\rangle \geq 0$. A monotone operator $\mathsf{A}$ is said to be \emph{maximal monotone} if  the graph of $\mathsf{A}$ is not properly contained in the graph of any other monotone operator on $H$. We denote by $zer(\mathsf{A})$, the set of all zeros of $\mathsf{A}$. 
Let $\mathsf{A}:H \rightrightarrows H$ be a maximal monotone operator and $\gamma >0 $. The resolvent function $J_{\gamma \mathsf{A}}$ is the single-valued function defined by $J_{\gamma \mathsf{A}} = (I + \gamma\mathsf{A} )^{-1}$, and the reflected resolvent function is the function $R_{\gamma \mathsf{A}}:=2 J_{\gamma \mathsf{A}}-\Id$.

A mapping $T : H \to H$ is called \emph{nonexpansive} if $\norm{T(x) -T(y)}\leq \norm{x -y},$ for all $x, y \in H$.  
The set %$\{x \in H: T(x)=x\}$ 
of fixed points of the mapping $T$ will be denoted by  $\Fix T$.  If $T$ is nonexpansive, then $\Fix T$ is a closed and convex subset of $H$. 

For $\alpha \in (0,1]$, a functional $T:H \to H$ is called \emph{$\alpha$-averaged}\footnote{The standard definition asks for $\alpha \in (0,1)$. With this extension, $1$-averaged is just another way of saying nonexpansive.} if there exists a nonexpansive operator $T':H \to H$ such that $T=(1-\alpha)\Id+\alpha T'$. The $\alpha$-averaged operators are always nonexpansive. The $\frac{1}{2}$-averaged operators are also called firmly nonexpansive operators.  The resolvent function $J_{\gamma \mathsf{A}}$ is firmly nonexpansive and the reflected resolvent function $R_{\gamma \mathsf{A}}$ is nonexpansive.

For $\delta >0$, a functional $\mathsf{B}:H \to H$ is said to be \emph{$\delta$-cocoercive} if for all $x,y \in H$ it holds that $\langle x-y, \mathsf{B}(x)-\mathsf{B}(y)\rangle \geq \gamma \norm{\mathsf{B}(x)-\mathsf{B}(y)}^2$.

For a comprehensive introduction to convex analysis and the theory of monotone operators in Hilbert spaces we refer to \cite{BC(11)}.

The main purpose of this paper is to extract quantitative information from the proof of the following result. 
\begin{theorem}{\rm (\cite[Theorem~3]{Botetal})}\label{t:original}
Let $(\beta_n), (\lambda_n) \subset (0,1]$ be real sequences satisfying:
\[
\begin{array}{lll}
(i) \lim \beta_n =1, & (ii)\sum_{n \geq 0} (1-\beta_n)= \infty, & (iii)\sum_{n \geq 1} |\beta_n - \beta_{n-1}|< \infty,\\
 (iv) \liminf \lambda_n >0, &(v) \sum_{n \geq 1} |\lambda_n-\lambda_{n-1}|< \infty.&
\end{array} 
\]
Consider the iterative scheme \eqref{mKM} with and arbitrary starting point $x_0 \in H$ and a nonexpansive
mapping $T : H \to H$ such that  $\Fix T \neq \emptyset$ . Then $(x_n)$ converges strongly to $\proj_{\Fix T} (0)$.
\end{theorem}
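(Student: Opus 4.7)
The plan is to follow the classical three-step strategy for Krasnoselski\u{\i}--Mann-type iterations with Tikhonov regularization, adapted so as to dispense with sequential weak compactness in the final identification of the limit. Write $q:=\proj_{\Fix T}(0)$; the target is $\|x_n - q\|\to 0$.

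\textbf{Step 1 (Boundedness).} For any $p\in\Fix T$, rewrite $x_{n+1}-p=(1-\lambda_n)(\beta_n x_n-p)+\lambda_n(T(\beta_n x_n)-T(p))$. Nonexpansiveness of $T$ gives
\[
\|x_{n+1}-p\| \leq \|\beta_n x_n - p\| \leq \beta_n\|x_n-p\| + (1-\beta_n)\|p\|,
\]
and since $\beta_n\in(0,1]$ this convex-combination inequality yields, by induction, $\|x_n-p\|\leq \max(\|x_0-p\|,\|p\|)$. Hence $(x_n)$, and therefore also $(\beta_n x_n)$ and $(T(\beta_n x_n))$, are bounded.

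\textbf{Step 2 (Asymptotic regularity).} I would expand $x_{n+1}-x_n$ using the definition of the iteration, absorbing the $\beta_n$-factors so that the nonexpansive part gives a contraction by $\beta_n$ and the error terms are controlled by $|\beta_n-\beta_{n-1}|$ and $|\lambda_n-\lambda_{n-1}|$ multiplied by the uniform bound $M$ from Step~1:
\[
\|x_{n+1}-x_n\| \leq \beta_n\|x_n-x_{n-1}\| + M\bigl(|\beta_n-\beta_{n-1}|+|\lambda_n-\lambda_{n-1}|\bigr).
\]
Conditions~(ii), (iii), (v) together with a standard Xu-type lemma then force $\|x_{n+1}-x_n\|\to 0$. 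Combined with $\beta_n\to 1$ (condition~(i)) and $\liminf\lambda_n>0$ (condition~(iv)), this gives the approximate fixed point property $\|x_n-T(x_n)\|\to 0$.

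\textbf{Step 3 (Strong convergence).} From the iteration and $\|T(\beta_n x_n)-q\|\leq \|\beta_n x_n - q\|$, I would derive the key recursive inequality
\[
\|x_{n+1}-q\|^2 \leq (1-(1-\beta_n^2))\,\|x_n-q\|^2 + 2(1-\beta_n)\langle -q,\, x_{n+1}-q\rangle + \text{(summable error)},
\]
in which $\sum(1-\beta_n^2)=\infty$ by (ii). To apply Xu's lemma it remains to show $\limsup_n\langle -q,\,x_n - q\rangle\leq 0$. This is the step where sequential weak compactness is usually invoked; instead, following the ``weak projection argument'' of \cite{K(11),FFLLPP(19)}, I would use the defining property of $q$ in the form $\langle -q,\,y-q\rangle\leq 0$ for $y\in\Fix T$, combined with $\|x_n-T(x_n)\|\to 0$, to show that for any $\varepsilon>0$ and all sufficiently large $n$ the inner product $\langle -q,\,x_n-q\rangle$ is bounded above by $\varepsilon$, via a direct estimate on approximate fixed points rather than through a weak limit.

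\textbf{Main obstacle.} The conceptual crux is Step~3: turning the projection characterization of $q$ into a quantitative upper bound on $\langle -q,\,x_n-q\rangle$ using only the approximate fixed point property of the $x_n$ and nonexpansiveness of $T$, without extracting a weak cluster point. The technical crux is the bookkeeping of the many error terms in the recursive inequality so that Xu's lemma applies cleanly; this is also where the quantitative rate of metastability will ultimately be read off.
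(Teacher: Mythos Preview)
Your Steps~1 and~2 match the paper's approach almost verbatim (the paper's unnumbered boundedness lemma and Lemma~\ref{l:asymptotic_regularity}), and the recursive inequality you set up in Step~3 is essentially the one derived in the proof of Theorem~\ref{t:main}, with the minor slip that the inner product term should carry $x_n$ rather than $x_{n+1}$ (it arises from expanding $\|\beta_n x_n-q\|^2$, not from the subdifferential-type inequality).

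The genuine gap is in how you propose to handle $\limsup_n\langle -q,\,x_n-q\rangle\leq 0$. The ``weak projection argument'' of \cite{K(11),FFLLPP(19)} (stated here as Lemma~\ref{l:projection}) does \emph{not} give a direct estimate of $\langle -q,\,y-q\rangle$ for approximate fixed points $y$ in terms of the true projection $q$. What it produces is an \emph{approximate} projection point $x$---itself only an approximate fixed point---satisfying $\langle x,\,x-y\rangle\leq\varepsilon$ for all sufficiently good approximate fixed points $y$ in a bounded set. This is exactly what the paper uses (Lemma~\ref{l:weak_compact} and the proof of Theorem~\ref{t:main}): the recursion is run with this approximate $x$, not with $q$, and the output is a \emph{metastability} bound, not convergence to $q$. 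The paper is explicit about this in the remark following Theorem~\ref{t:main}: identifying the limit as $\proj_{\Fix T}(0)$ still requires the projection argument proper.

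In short, the variational inequality $\langle -q,\,y-q\rangle\leq 0$ holds for $y\in\Fix T$, and there is no ``direct estimate'' extending it to approximate fixed points without passing through a weak cluster point and demiclosedness of $\Id-T$. For the qualitative statement of Theorem~\ref{t:original} this is harmless---sequential weak compactness is available in Hilbert space and is precisely what the original proof in \cite{Botetal} uses---but your proposal to bypass it does not work as written. The elimination of weak compactness is a feature of the \emph{finitary} analysis, purchased at the price of replacing convergence to $q$ by metastability of $(x_n)$.
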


Our main result (Theorem~\ref{t:main}) gives a bound on the \emph{metastability} property of the sequence $(x_n)$, i.e.
\begin{equation*}
\forall k \in\N \, \forall f:\N \to \N \, \exists n \in \N \, \forall i,j \in [n,f(n)] \,(\norm{x_i-x_j}\leq \frac{1}{k+1} ),
\end{equation*}
which is (non-effectively) equivalent to the Cauchy property for  $(x_n)$.

We now present some useful quantitative lemmas which require the notion of monotone functional for two particular cases. This relies on  the strong majorizability relation from \cite{bezem1985strongly}. 
\begin{definition}
For functions $f,g:\N\to\N$ we define
$$g\leq^* f := \forall n, m\in\N\,\left(m\leq n\to \left( g(m)\leq f(n) \land f(m)\leq f(n)\right)\right).$$
A function $f:\N\to\N$ is said to be \emph{monotone} if  $f\leq^* f$.
We say that a functional $\varphi:\N\times \N^{\N}\to\N$ is \emph{monotone} if for all $m,n\in\N$ and all $f,g:\N\to\N$,
$$\left(m\leq n \land g\leq^* f\right) \to \left(\varphi(m,g)\leq \varphi(n,f)\right).$$
%A function depending on several variables (ranging over $\N$ or over $\N^\N$) is said to be monotone if it is monotone in all the variables.
\end{definition}

\begin{remark}\label{maj}
For $f:\N \to \N$, the notion of being monotone corresponds to saying that $f$ is a  nondecreasing function, i.e.\ $\forall n\in\N\, \left(f(n)\leq f(n+1)\right)$. 

In what follows we restrict our metastability results to monotone functions in $\N^\N$. However, there is no real restriction in doing so, as for $f: \N \to \N$, one has $f \leq^* \! f^{\mathrm{maj}}$, where $f^{\mathrm{maj}}$ is the monotone function defined by $f^{\mathrm{maj}}(n):= \max\{f(i)\, :\, i \leq n\}$. In this way, we avoid constantly having to switch from $f$ to  $f^{\mathrm{maj}}$, and simplify the notation.
\end{remark}

\begin{notation}\label{notation}
	Consider a function $\varphi$ on tuples of variables $\bar{x}$, $\bar{y}$. If we wish to consider the variables $\bar{x}$ as parameters we write $\varphi[\bar{x}](\bar{y})$. For simplicity of notation we may then even omit the parameters and simply write $\varphi(\bar{y})$.
\end{notation}

\begin{lemma}[\cite{LLPP(ta)}]\label{LemmaLLPP}
Let $(s_n)$ be a bounded sequence of non-negative real numbers, with $d\in\N \setminus\{0\}$ an upper bound for $(s_n)$, such that for any $n\in \N$
	\begin{equation*}
		s_{n+1}\leq (1-\alpha_n)s_n+\alpha_nr_n + \gamma_n,
	\end{equation*}
	\noindent where $(\alpha_n)\subset [0,1]$, $(r_n)$ and $(\gamma_n)\subset [0,+\infty)$ are given sequences of real numbers.\\	
	Assume that exist functions $A$, $R$, $G:\N \to \N$ such that, for all $k \in \N$
	\[
\begin{array}{lll}
		 (i)& \sum\limits_{i=1}^{A(k)} \alpha_i \geq k\, ;  &(ii)\, \forall n\geq R(k) \, \left( r_n \leq \dfrac{1}{k+1}\right);\\
		 (iii)&\,  \forall n \in \N \, \left( \sum\limits_{i=G(k)+1}^{G(k)+n} \gamma_i \leq \dfrac{1}{k+1} \right).&
	\end{array}
	\]
	Then
	\begin{equation*}
		\forall k \in \N \,\forall n\geq \theta(k) \, (s_n\leq \dfrac{1}{k+1}),
	\end{equation*}
	\noindent with $\theta(k):=\theta[A, R, G, d](k):=A(M-1+\lceil \ln(3d(k+1))\rceil)+1$, where \\$M:=\max\{ R(3k+2), G(3k+2)+1 \}$.
	\end{lemma}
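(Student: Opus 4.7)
The plan is to follow the usual argument for lemmas of this type (a quantitative version of Xu's/Liang's lemma), namely iterate the recurrence from a suitably chosen index $M$, use the product/telescoping trick for the $\alpha_i r_i$ terms, and convert the product of $1-\alpha_i$ into an exponential. The parameter $\varepsilon := \frac{1}{3(k+1)}$ will play the role of the ``small'' quantity, which explains the presence of $3k+2$ inside the definition of $M$ (so that both $r_n$ and the tail of the $\gamma$'s are controlled by $\varepsilon$).

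More concretely, fix $k\in\N$ and set $M := \max\{R(3k+2),\, G(3k+2)+1\}$. By hypothesis (ii), $r_n \leq \frac{1}{3k+3}$ for $n \geq M$, and by (iii), $\sum_{i=M}^{n} \gamma_i \leq \frac{1}{3k+3}$ for every $n\geq M$. Hence, for $n\geq M$, the recurrence gives
\begin{equation*}
s_{n+1} \;\leq\; (1-\alpha_n)s_n + \alpha_n \tfrac{1}{3k+3} + \gamma_n.
\end{equation*}
Iterating this from index $M$ up to $n\geq M$, and using the identity $\alpha_i \prod_{j=i+1}^n(1-\alpha_j) = \prod_{j=i+1}^n(1-\alpha_j) - \prod_{j=i}^n(1-\alpha_j)$ (which holds because $\alpha_j\in[0,1]$) to telescope the middle term, together with the bound $\prod_{j=i+1}^n(1-\alpha_j) \leq 1$ for the $\gamma_i$ term, I obtain
\begin{equation*}
s_{n+1} \;\leq\; \Big(\prod_{i=M}^{n}(1-\alpha_i)\Big)\, s_M \;+\; \tfrac{1}{3k+3} \;+\; \sum_{i=M}^{n}\gamma_i
\;\leq\; d\,\exp\!\Big(-\!\sum_{i=M}^{n}\alpha_i\Big) + \tfrac{2}{3k+3},
\end{equation*}
using $1-x\leq e^{-x}$ and $s_M\leq d$.

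To force the first term below $\tfrac{1}{3k+3}$, I need $\sum_{i=M}^{n}\alpha_i \geq \ln(3d(k+1))$, and since $\alpha_i\in[0,1]$ implies $\sum_{i=1}^{M-1}\alpha_i \leq M-1$, it suffices to have $\sum_{i=1}^n \alpha_i \geq M-1+\lceil\ln(3d(k+1))\rceil$. By hypothesis (i), this is guaranteed as soon as $n \geq A\bigl(M-1+\lceil\ln(3d(k+1))\rceil\bigr)$. Combining the three $\tfrac{1}{3k+3}$ contributions yields $s_{n+1}\leq \tfrac{1}{k+1}$, and since the lower bound on $\sum_{i=M}^n \alpha_i$ is preserved for all larger $n$, the bound propagates to every $m\geq \theta(k) := A(M-1+\lceil\ln(3d(k+1))\rceil)+1$.

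The only real subtlety — and the main bookkeeping obstacle — is the passage from the partial-sum hypothesis on $\alpha_i$ (given only from index $1$) to the partial sum from index $M$; this is handled by the trivial loss of at most $M-1$ units, which is exactly why $M-1$ appears in the final formula for $\theta(k)$. The rest is careful constant-tracking to make sure the three error sources ($\varepsilon$ from $r_n$, $\varepsilon$ from the tail of $\gamma_n$, $\varepsilon$ from the exponentially decaying initial term) each contribute $\tfrac{1}{3(k+1)}$.
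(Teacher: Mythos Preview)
The paper does not supply its own proof of this lemma: it is quoted verbatim from the external reference \cite{LLPP(ta)} and used as a black box, so there is no in-paper argument to compare your proposal against.

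That said, your argument is correct and is exactly the standard proof of this quantitative Xu-type lemma: iterate the recurrence from the threshold $M$, telescope the $\alpha_i$-weighted terms via $\alpha_i\prod_{j>i}(1-\alpha_j)=\prod_{j>i}(1-\alpha_j)-\prod_{j\geq i}(1-\alpha_j)$, bound the $\gamma_i$-tail by condition~(iii), and control the residual product by $1-x\leq e^{-x}$ together with the divergence rate~$A$. One small point you leave implicit is that the iteration interval $[M,n]$ is nonempty once $n\geq A(K)$ with $K:=M-1+\lceil\ln(3d(k+1))\rceil$; this follows because $\alpha_i\leq 1$ forces $A(K)\geq \sum_{i=1}^{A(K)}\alpha_i\geq K\geq M$ (using $d\geq 1$ so that $\lceil\ln(3d(k+1))\rceil\geq 2$). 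With that observation made explicit, the bookkeeping is complete.
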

	
The next result is an easy adaptation of \cite[Lemma~14]{PP(ta)} for the case where $(\gamma_n)\equiv 0$. 
\begin{lemma}\label{l:Xu2}
Let $(s_n)$ be a bounded sequence of non-negative real numbers and $d\in\N$ a positive upper bound on $(s_n)$. Consider sequences of real numbers $(\alpha_n)\subset (0,1)$, $(r_n)$ and $(v_n)$ and assume the existence of a monotone function $A$ such that $\sum_{i=1}^{A(k)} \alpha_i\geq k$, for all $k \in \N$. For natural numbers $k, n$ and $q$ assume
\[\forall i\in[n,q]\, (v_i\leq \frac{1}{3(k+1)(q+1)}\land r_i\leq \frac{1}{3(k+1)}),\]
and $s_{i+1}\leq (1-\alpha_i)(s_i+v_i)+\alpha_ir_i$, for all $i\in\N$.
Then
\[\forall i\in[\sigma(k,n),q]\, (s_i\leq \frac{1}{k+1}),\]
with $\sigma(k,n):=\sigma[A, d](k,n):=A\left(n+\lceil \ln(3d(k+1))\rceil\right)+1$.
\end{lemma}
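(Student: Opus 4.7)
The plan is to iterate the assumed recursion from the starting index $n$, producing a decaying product term plus two error sums, and then bound each part using the three standing hypotheses. For every $i \in [n, q]$ a straightforward induction gives
\[
s_{i+1} \;\leq\; \Big(\prod_{l=n}^{i}(1-\alpha_l)\Big)\, s_n \;+\; \sum_{j=n}^{i} (1-\alpha_j) v_j \prod_{l=j+1}^{i}(1-\alpha_l) \;+\; \sum_{j=n}^{i} \alpha_j r_j \prod_{l=j+1}^{i}(1-\alpha_l).
\]

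The $r$-sum is controlled via the telescoping identity
\[
\sum_{j=n}^{i} \alpha_j \prod_{l=j+1}^{i}(1-\alpha_l) \;=\; 1 - \prod_{l=n}^{i}(1-\alpha_l) \;\leq\; 1,
\]
so that it is bounded by $\max_{j \in [n,i]} r_j \leq \frac{1}{3(k+1)}$. The $v$-sum is handled crudely: each coefficient $(1-\alpha_j)\prod_{l=j+1}^{i}(1-\alpha_l)$ lies in $[0,1]$, there are at most $q+1$ summands, and $v_j \leq \frac{1}{3(k+1)(q+1)}$, giving a total of at most $\frac{1}{3(k+1)}$. For the leading term, using $s_n \leq d$ and $1-x \leq e^{-x}$ one obtains
\[
\Big(\prod_{l=n}^{i}(1-\alpha_l)\Big)\, s_n \;\leq\; d\,\exp\!\Big({-\sum_{l=n}^{i}\alpha_l}\Big) \;\leq\; d\,\exp\!\Big(n - \sum_{l=1}^{i}\alpha_l\Big),
\]
where the last step uses $\alpha_l < 1$ to bound the missing prefix $\sum_{l=1}^{n-1}\alpha_l \leq n$. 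Demanding this contribution to be $\leq \frac{1}{3(k+1)}$ reduces to $\sum_{l=1}^{i}\alpha_l \geq n + \ln(3d(k+1))$, which by monotonicity of $A$ is ensured whenever $i \geq A\bigl(n + \lceil \ln(3d(k+1))\rceil\bigr)$, i.e.\ $i+1 \geq \sigma(k,n)$.

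Combining the three estimates gives $s_m \leq \frac{1}{k+1}$ for every $m \in [\sigma(k,n), q]$, which is the desired conclusion (the range $i \in [n, q-1]$ for the recursion ensures that the hypotheses on $v_j, r_j$ apply throughout the summations). The only genuine subtlety is the calibration of the hypotheses: the factor $\frac{1}{q+1}$ attached to $v_i$ is precisely what is needed to absorb the crude length-$(q+1)$ bound on the $v$-sum, while the $\frac{1}{3}$-splitting distributes the three additive error contributions evenly. I expect the main obstacle, modest as it is, to be organising this bookkeeping consistently so that $\sigma(k,n)$ turns out independent of $q$, depending only on $A$ and $d$ as parameters in the sense of Notation~\ref{notation}.
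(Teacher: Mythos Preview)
Your argument is correct. The paper does not supply a proof of this lemma at all: it simply records that the result is an easy adaptation, for the case $(\gamma_n)\equiv 0$, of a lemma proved elsewhere, and moves on. Your direct unrolling of the recursion---with the telescoping bound on the $r$-sum, the crude length bound on the $v$-sum (which is exactly what the factor $1/(q+1)$ in the hypothesis is designed to absorb), and the exponential-decay estimate for the product term via $1-x\leq e^{-x}$---is precisely the standard computation behind such quantitative Xu-type lemmas and yields the stated $\sigma(k,n)$, so your self-contained proof supplies what the paper omits.

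One cosmetic remark: in the last step what you actually use is not the monotonicity of $A$ but the monotonicity of the partial sums $i\mapsto\sum_{l=1}^{i}\alpha_l$ (each $\alpha_l>0$) together with $\sum_{l=1}^{A(K)}\alpha_l\geq K$ for $K=n+\lceil\ln(3d(k+1))\rceil$; the conclusion $\sum_{l=1}^{i}\alpha_l\geq n+\ln(3d(k+1))$ for $i\geq A(K)=\sigma(k,n)-1$ follows from that. This is a wording issue only, not a gap.
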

The next result corresponds to a quantitative version of a weaker form of the projection argument of zero onto $\Fix T$. Below $B_{N}:=\{x \in H: \norm{x-p}\leq N\}$, where $p \in \Fix T$ is made clear by the context and $N \in \N$.
\begin{lemma}[\cite{PP(ta)}]\label{l:projection}
	 Let  $N\in \N\setminus \{0\}$ be such that $N\geq 2\norm{p}$ for some point $p\in \Fix T$.
	 For any $k\in \N$ and monotone function $f:\N \to \N $, there are $n \leq 24N(\check{f}^{(R)}(0)+1)^2$ and $x\in {\rm B}_N$ such that $\|T(x)-x \| \leq \frac{1}{f(n)+1}$ and 
	 $$ \forall y\in\! B_N 
	 (\|T(y)-y\|\leq \frac{1}{n+1} \to \langle x,x-y\rangle \leq \frac{1}{k+1}),$$ 
	 with $R:=4N^4(k+1)^2$ and $\check{f}:=\max\{ f(24N(m+1)^2),\, 24N(m+1)^2 \}$.	
\end{lemma}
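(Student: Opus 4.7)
The plan is to replace the usual weak compactness step in the projection argument of $0$ onto $\Fix T$ by a finite decrease-of-norm iteration carried out inside $B_N$. Throughout I use that $N\geq 2\|p\|$ gives $\|x\|\leq 3N/2$ for $x\in B_N$ and $\|x-y\|\leq 2N$ for $x,y\in B_N$. A preliminary ingredient is the uniform quantitative production of approximate fixed points: for every $m\in\N$ there exists $z\in B_N$ with $\|T(z)-z\|\leq 1/(m+1)$, obtained e.g.\ from a Halpern-type regularization $z_\alpha=\alpha T(z_\alpha)$; a routine analysis shows that an auxiliary parameter of order $24N(m+1)^2$ suffices, which is the origin of the factor $24N(m+1)^2$ appearing in the definition of $\check{f}$.

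The construction iterates as follows. Set $n_m:=24N(\check{f}^{(m)}(0)+1)^2$ and assume $x_m\in B_N$ is an approximate fixed point at a suitable level. Test whether
\[
\forall y\in B_N\,\bigl(\|T(y)-y\|\leq \tfrac{1}{n_m+1}\,\to\,\langle x_m,x_m-y\rangle\leq \tfrac{1}{k+1}\bigr).
\]
If yes, output $(x,n):=(x_m,n_m)$ and stop. If no, pick a witness $y_m\in B_N$ violating the implication, and choose $x_{m+1}\in B_N$ to be an approximate fixed point close to
\[
\tilde x_{m+1}:=(1-\lambda)x_m+\lambda y_m,\qquad \lambda:=\tfrac{1}{4N^2(k+1)}.
\]
The Hilbert space identity $\|\tilde x_{m+1}\|^2=\|x_m\|^2-2\lambda\langle x_m,x_m-y_m\rangle+\lambda^2\|x_m-y_m\|^2$ combined with $\langle x_m,x_m-y_m\rangle>1/(k+1)$ and $\|x_m-y_m\|\leq 2N$ yields the decisive decrease $\|\tilde x_{m+1}\|^2\leq\|x_m\|^2-\tfrac{1}{4N^2(k+1)^2}$, and choosing $x_{m+1}$ close enough to $\tilde x_{m+1}$ preserves a strict norm decrease. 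Since $\|x_m\|^2\leq 9N^2/4$ is bounded, the iteration must terminate at some $m\leq R=4N^4(k+1)^2$, giving the stated bound $n\leq n_R=24N(\check{f}^{(R)}(0)+1)^2$.

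The main obstacle is the bookkeeping needed to guarantee that at whichever stage $m\leq R$ the iteration halts, the output $x_m$ is \emph{itself} a sufficiently good approximate fixed point, namely $\|T(x_m)-x_m\|\leq 1/(f(n_m)+1)$. The function $\check{f}(m)=\max\{f(24N(m+1)^2),24N(m+1)^2\}$ is engineered precisely to absorb, in one application, the degradation incurred by passing from $x_m$ to $x_{m+1}$: the required level at stage $m$ should be $\check{f}^{(R-m)}(0)$, obtained by downward induction starting from the terminal requirement $f(n_R)$. Initializing $x_0$ at the deepest level $\check{f}^{(R)}(0)$, which the preliminary lemma allows, then propagates through all $R$ possible stages, and the iterated composition $\check{f}^{(R)}$ in the statement of the lemma arises exactly from this propagation.
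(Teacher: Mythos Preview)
First, note that the paper does not prove this lemma at all: it is quoted from \cite{PP(ta)} as an external input. So there is no in-paper proof to compare against, and I can only assess whether your sketch reconstructs a correct argument.

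Your overall architecture---a norm-descent iteration inside $B_N$ that must terminate in at most $R=4N^4(k+1)^2$ steps, with the tower $\check f^{(R)}$ tracking the required approximate-fixed-point levels---is indeed the shape of the argument behind this kind of quantitative projection lemma. But the proposal has a genuine gap at the decisive step. You write ``choose $x_{m+1}\in B_N$ to be an approximate fixed point close to $\tilde x_{m+1}$'', yet your only preliminary tool produces \emph{some} approximate fixed point in $B_N$, not one near a prescribed point; in general no approximate fixed point need lie near the convex combination $\tilde x_{m+1}$, so as stated this step is unjustified. The actual mechanism is different: one takes $x_{m+1}:=\tilde x_{m+1}$ itself and shows it \emph{is} an approximate fixed point, using the Hilbert-space identity
\[
\bigl\|T(z)-\bigl((1-\lambda)T(x)+\lambda T(y)\bigr)\bigr\|^2 \;\le\; \lambda(1-\lambda)\bigl(\|x-y\|^2-\|T(x)-T(y)\|^2\bigr)
\]
for $z=(1-\lambda)x+\lambda y$. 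Combined with $\|x-y\|-\|Tx-Ty\|\le\|(I-T)x\|+\|(I-T)y\|$, this yields $\|T(z)-z\|\lesssim \sqrt{N\lambda(\delta_x+\delta_y)}+\max(\delta_x,\delta_y)$: a \emph{square-root} degradation of the level at each step. It is precisely this squaring that forces the factor $24N(m+1)^2$ in the definition of $\check f$ and makes $\check f^{(R)}(0)$ grow doubly exponentially in $R$; your attribution of that factor to a Halpern/Browder construction is not correct.

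There is also a quantitative slip in the termination count. From your generic bound $\|x_m\|^2\le 9N^2/4$ and the per-step decrease $\tfrac{1}{4N^2(k+1)^2}$ you only get $9N^4(k+1)^2$ iterations, which exceeds $R$. One must initialise at the fixed point $x_0:=p$ (an exact fixed point), so that $\|x_0\|^2\le \|p\|^2\le N^2/4$, giving termination within $N^4(k+1)^2\le R$ steps; this also explains why the hypothesis is $N\ge 2\|p\|$ rather than merely $p\in B_N$.
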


\section{Quantitative results}\label{Sectionanalysis}

In this section we give a quantitative analysis of Theorem~\ref{t:original} as well as some of its corollaries. We start by stating the relevant quantitative conditions.

Given positive real sequences $(\beta_n), (\lambda_n) $, a constant $\ell \in \N\setminus \{0\}$ and functions $b,D,B,L: \N \to \N$ and $h: \N\to \N\setminus \{0\}$ we consider the following conditions: 
\begin{enumerate}[($Q_1$)]
\item\label{Q1} $\forall n \in \N(\beta_n \geq \frac{1}{h(n)})$
\item\label{Q2} $\forall k \in \N \,\forall n \geq b(k) (|1-\beta_n| \leq \frac{1}{k+1})$,
\item\label{Q3} $\forall k \in \N \left(\sum_{i=1}^{D(k)}(1- \beta_i) \geq k\right)$,
\item\label{Q4}  $\forall k \in \N \, \forall n \in \N \left(\sum_{i=B(k)+1}^{B(k)+n}|\beta_{i}-\beta_{i-1}|\leq \frac{1}{k+1} \right)$,
\item\label{Q5} $\forall n \in \N (\lambda_n \geq \frac{1}{\ell})$,
\item\label{Q6}  $\forall k \in \N \, \forall n \in \N \left(\sum_{i=L(k)+1}^{L(k)+n}|\lambda_{i}-\lambda_{i-1}|\leq \frac{1}{k+1} \right)$.
\end{enumerate}

The first condition states that the function $h$ witnesses the fact that $\beta_n  >0$. The conditions $(Q_{\ref{Q2}})-(Q_{\ref{Q6}})$ correspond to quantitative strenghtenings of the conditions $(i)-(v)$ in Theorem~\ref{t:original}. Indeed, $b$ is a rate of convergence for $\beta_n \to 1$; the function $D$ is a rate of divergence for $\left(\sum(1- \beta_i)\right)$; the functions $B$ and $L$ are rates for the Cauchy property of the convergent series $\sum|\beta_{i}-\beta_{i-1}|$ and $\sum|\lambda_{i}-\lambda_{i-1}|$, respectively, and the number $\ell$ is used to express the fact that $(\lambda_n)$ is bounded away from $0$.

\begin{example}
Consider the sequences defined by $\lambda_n=\beta_n=1-\frac{1}{n+2}$, for all $n \in \N$. Then the conditions $(Q_{\ref{Q1}})-(Q_{\ref{Q6}})$ are satisfied with $h \equiv 2$, $b=B=L=\Id$, $D(k):=e^{k+2}$ and $\ell=2$. 
\end{example}

In the following lemmas $(x_n)$ always denotes a sequence generated by \eqref{mKM}.

\begin{lemma}
Let $N\in \N \setminus\{0\}$ be such that $N \geq \max\{\norm{x_0-p},\norm{p}\}$, for some $p \in \Fix T$. Then, $\norm{x_n} \leq 2N$ and $\norm{T(x_n)} \leq 2N$, for all $n \in \N$.
\end{lemma}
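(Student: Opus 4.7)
The plan is to prove by induction on $n$ that $\norm{x_n - p} \leq N$, and then deduce the two stated bounds by a single application of the triangle inequality together with the nonexpansiveness of $T$.

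For the base case, $\norm{x_0 - p} \leq N$ is given. For the induction step, I would rewrite the iteration as
\[
x_{n+1} - p = (1 - \lambda_n)(\beta_n x_n - p) + \lambda_n \bigl(T(\beta_n x_n) - p\bigr),
\]
so that, since $\lambda_n \in [0,1]$ and $T(p) = p$ (together with the nonexpansiveness of $T$), the triangle inequality yields
\[
\norm{x_{n+1} - p} \leq (1 - \lambda_n)\norm{\beta_n x_n - p} + \lambda_n \norm{T(\beta_n x_n) - T(p)} \leq \norm{\beta_n x_n - p}.
\]
The key observation is then that, writing $\beta_n x_n - p = \beta_n(x_n - p) - (1 - \beta_n) p$, and using $\beta_n \in (0,1]$ together with the induction hypothesis and the assumption $\norm{p} \leq N$, we obtain
\[
\norm{\beta_n x_n - p} \leq \beta_n \norm{x_n - p} + (1 - \beta_n)\norm{p} \leq \beta_n N + (1 - \beta_n) N = N.
\]
This completes the induction.

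Finally, from $\norm{x_n - p} \leq N$ and $\norm{p} \leq N$, the triangle inequality gives $\norm{x_n} \leq 2N$. For the second bound, nonexpansiveness of $T$ and $T(p) = p$ yield $\norm{T(x_n) - p} \leq \norm{x_n - p} \leq N$, and another application of the triangle inequality gives $\norm{T(x_n)} \leq 2N$.

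There is no real obstacle here; the only subtle point is recognizing that the presence of the factor $\beta_n \in (0,1]$ inside $T$ does not spoil the convex-combination argument, because $\beta_n x_n$ lies on the segment between $0$ and $x_n$ and both endpoints have distance at most $N$ from $p$. This is exactly what makes the Tikhonov regularization term compatible with the standard $\mathsf{KM}$-type bound.
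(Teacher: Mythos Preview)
Your argument is correct and follows essentially the same route as the paper: first establish $\norm{x_n-p}\leq N$ by induction via $\norm{x_{n+1}-p}\leq \norm{\beta_n x_n - p}\leq \beta_n\norm{x_n-p}+(1-\beta_n)\norm{p}$, and then deduce both bounds using the triangle inequality and the nonexpansiveness of $T$. The paper's proof is slightly more terse but identical in substance.
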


\begin{proof}
Let $p \in \Fix T$ be such that $N \geq \max\{\norm{x_0-p},\norm{p}\}$. Since $T$ is nonexpansive, we have
\begin{equation*}
\begin{split}
\norm{x_{n+1}-p}& \leq (1-\lambda_n)\norm{\beta_nx_n-p}+\lambda_n\norm{T(\beta_nx_n)-T(p)}\\
& \leq \norm{\beta_nx_n-p} = \norm{\beta_n(x_n-p)+(\beta_n-1)p}\\
& \leq \beta_n \norm{x_n-p}+ (1-\beta_n)\norm{p}.
\end{split}
\end{equation*}
By induction on $n$ one easily shows that $\norm{x_n-p} \leq N$. Since $T$ is nonexpansive, $\norm{T(x_n)-p}\leq N$ and the result follows from the fact that $N \geq \norm{p}$.
\end{proof}
The next lemma gives a rate of asymptotic regularity for the sequence $(x_n)$.
\begin{lemma}\label{l:asymptotic_regularity}
Let $N \in \N \setminus\{0\}$ be such that $N \geq \max\{\norm{x_0-p},\norm{p}\}$, for some fixed point $p$. Consider $\ell \in \N \setminus\{0\}$ and monotone functions $b,D,B,L:\N \to \N$ satisfying conditions $(Q_{\ref{Q2}})-(Q_{\ref{Q6}})$. Then
\begin{enumerate}[$(i)$]
\item $\forall k \in \N\, \forall n \geq \nu_1(k) ( \norm{x_{n+1}-x_n}\leq \frac{1}{k+1})$,
\item $\forall k \in \N\, \forall n \geq \nu_2(k) ( \norm{T(x_{n})-x_n}\leq \frac{1}{k+1})$,
\end{enumerate}
$$\text{where }\quad\nu_1(k):=\nu_1[N,D,B,L](k):=\theta[D,\mathbf{0},G,2N](k)\text{, and}$$  
$$\nu_2(k):=\nu_2[N,\ell,b,D,B,L](k):=\max\{b(4N\ell(k+1)-1),\nu_{1}(2\ell(k+1)-1)\},$$
with $G(k):=G[N,B,L](k):=\max\{B(4N(k+1)-1),L(10N(k+1)-1)\}$, and $\theta$ is as in Lemma~\ref{LemmaLLPP} and $\mathbf{0}$ is the zero function.
\end{lemma}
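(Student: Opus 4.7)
My plan is first to derive a one-step recurrence of the shape required by Lemma~\ref{LemmaLLPP} for $s_n:=\norm{x_n-x_{n-1}}$, use that to obtain $(i)$, and then deduce $(ii)$ by a triangle-inequality decomposition combined with $(Q_{\ref{Q2}})$ and part $(i)$.

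Writing $x_{n+1}=(1-\lambda_n)\beta_n x_n+\lambda_n T(\beta_n x_n)$ and expanding
$$x_{n+1}-x_n=\bigl[(1-\lambda_n)\beta_n x_n-(1-\lambda_{n-1})\beta_{n-1}x_{n-1}\bigr]+\bigl[\lambda_n T(\beta_n x_n)-\lambda_{n-1}T(\beta_{n-1}x_{n-1})\bigr],$$
I would split each bracket into a contraction acting on $x_n-x_{n-1}$ plus a coefficient-variation term, using nonexpansivity of $T$ and the crucial regrouping $(1-\lambda_n)\beta_n-(1-\lambda_{n-1})\beta_{n-1}=(1-\lambda_n)(\beta_n-\beta_{n-1})-(\lambda_n-\lambda_{n-1})\beta_{n-1}$, which cleanly separates the $\beta$- and $\lambda$-variations. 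After bounding $\norm{x_{n-1}}$ and $\norm{T(\beta_{n-1}x_{n-1})}$ by $2N$ (which follows from the preceding lemma together with $\norm{p}\leq N$), this yields
$$s_{n+1}\leq \beta_n s_n+c_1 N\,|\beta_n-\beta_{n-1}|+c_2 N\,|\lambda_n-\lambda_{n-1}|$$
for absolute constants $c_1,c_2$. This fits Lemma~\ref{LemmaLLPP} with $\alpha_n:=1-\beta_n$, $r_n\equiv 0$, and $\gamma_n:=c_1 N|\beta_n-\beta_{n-1}|+c_2 N|\lambda_n-\lambda_{n-1}|$: condition $(Q_{\ref{Q3}})$ supplies $A=D$; one may take $R=\mathbf{0}$; the upper bound is $d=2N$; and $G$ is produced by splitting the target $\tfrac{1}{k+1}$ evenly between the two tails and invoking $(Q_{\ref{Q4}})$ and $(Q_{\ref{Q6}})$, giving precisely $G(k)=\max\{B(4N(k+1)-1),L(10N(k+1)-1)\}$. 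The conclusion of Lemma~\ref{LemmaLLPP} then delivers $\nu_1$, proving $(i)$.

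For $(ii)$, I would decompose
$$\norm{T(x_n)-x_n}\leq\norm{T(x_n)-T(\beta_n x_n)}+\norm{T(\beta_n x_n)-\beta_n x_n}+(1-\beta_n)\norm{x_n}.$$
Nonexpansivity bounds the first summand by $(1-\beta_n)\norm{x_n}$. For the middle summand, the iteration gives $T(\beta_n x_n)-\beta_n x_n=\lambda_n^{-1}(x_{n+1}-\beta_n x_n)$; writing $x_{n+1}-\beta_n x_n=(x_{n+1}-x_n)+(1-\beta_n)x_n$ and using $\lambda_n^{-1}\leq\ell$ together with $\norm{x_n}\leq 2N$ leads to an estimate of the form $4N\ell(1-\beta_n)+\ell\,\norm{x_{n+1}-x_n}$. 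Splitting the target $\tfrac{1}{k+1}$ evenly, the first piece is controlled by $(Q_{\ref{Q2}})$ at the threshold $b(4N\ell(k+1)-1)$, and the second by part $(i)$ at the threshold $\nu_1(2\ell(k+1)-1)$; their maximum is $\nu_2$.

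The delicate step is arranging the recurrence so that the contraction factor in front of $\norm{x_n-x_{n-1}}$ is exactly $\beta_n$ (and hence $\alpha_n=1-\beta_n$): this is what lets the divergence hypothesis $(Q_{\ref{Q3}})$ drive $s_n\to 0$ and thereby makes Lemma~\ref{LemmaLLPP} applicable. The rest is routine triangle-inequality bookkeeping using the uniform bounds of the preceding lemma.
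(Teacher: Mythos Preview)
Your plan for part $(i)$ is essentially the paper's argument: the same one-step recurrence $s_{n+1}\leq\beta_n s_n+\gamma_n$ is derived (the paper obtains $c_1=2$, $c_2=5$ via the cruder bound $\norm{T(\beta_{n-1}x_{n-1})}\leq 3N$, which is what produces the $10N$ in $G$), and Lemma~\ref{LemmaLLPP} is applied with $A=D$, $R=\mathbf{0}$, $d=2N$. One small point: the preceding lemma only bounds $\norm{T(x_n)}$, not $\norm{T(\beta_n x_n)}$; the latter needs its own one-line verification.

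For part $(ii)$, however, your decomposition does \emph{not} deliver the stated $\nu_2$. Carrying out your three-term split and using $\norm{x_n}\leq 2N$ actually gives
\[
\norm{T(x_n)-x_n}\;\leq\;(4N+2N\ell)(1-\beta_n)\;+\;\ell\,\norm{x_{n+1}-x_n},
\]
not $4N\ell(1-\beta_n)+\ell\,\norm{x_{n+1}-x_n}$ (note $4N+2N\ell>4N\ell$ when $\ell=1$). Even granting your claimed coefficient $4N\ell$, the threshold $b(4N\ell(k+1)-1)$ only forces $4N\ell(1-\beta_n)\leq\frac{1}{k+1}$, leaving no room for the $\ell\,\norm{x_{n+1}-x_n}$ contribution; you would need $b(8N\ell(k+1)-1)$. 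The paper obtains the sharper coefficient $2N\ell$ by a different route: it writes $x_n-T(x_n)=(x_n-x_{n+1})+(x_{n+1}-T(x_n))$ and expands $x_{n+1}-T(x_n)=(1-\lambda_n)(\beta_n x_n-T(x_n))+\lambda_n(T(\beta_n x_n)-T(x_n))$. After estimating $\norm{\beta_n x_n-T(x_n)}\leq\beta_n\norm{x_n-T(x_n)}+(1-\beta_n)\norm{T(x_n)}$, a term $(1-\lambda_n)\norm{x_n-T(x_n)}$ appears on the right and can be absorbed into the left, yielding $\lambda_n\norm{x_n-T(x_n)}\leq\norm{x_{n+1}-x_n}+2N(1-\beta_n)$ and hence exactly $\ell\bigl(\norm{x_{n+1}-x_n}+2N(1-\beta_n)\bigr)$, which matches $\nu_2$ on the nose.
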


\begin{proof}
For all $n \in \N$ we have 
\begin{equation}\label{e:Tbx}
\begin{split}
\norm{T(\beta_nx_n)}&\leq \norm{T(\beta_nx_n)-T(\beta_np)}+\norm{T(\beta_np)-T(p)}+\norm{T(p)}\\
%& \leq \norm{\beta_nx_n-\beta_np}+\norm{\beta_np-p}+\norm{p}\\
& \leq \norm{x_n-p}+(1-\beta_n)\norm{p}+\norm{p} \leq 3N 
\end{split}
\end{equation}
Using \eqref{e:Tbx} we derive
\begin{equation*}
\begin{split}
\norm{x_{n+1}-x_n}&
%= \norm{(1-\lambda_n)\beta_nx_n-(1-\lambda_{n-1})\beta_{n-1}x_{n-1} +\lambda_nT(\beta_nx_n)-\lambda_{n-1}T(\beta_{n-1}x_{n-1})}\\
 \leq \norm{(1-\lambda_n)(\beta_nx_n-\beta_{n-1}x_{n-1})+(\lambda_{n-1}-\lambda_n)\beta_{n-1}x_{n-1}}\\
& \quad+\norm{\lambda_n(T(\beta_{n}x_{n})-T(\beta_{n-1}x_{n-1}))+(\lambda_n-\lambda_{n-1})T(\beta_{n-1}x_{n-1})}\\
& \leq \norm{\beta_nx_n-\beta_{n-1}x_{n-1}}+ |\lambda_n-\lambda_{n-1}|(\norm{\beta_{n-1}x_{n-1}}+\norm{T(\beta_{n-1}x_{n-1})})\\
& \leq \norm{\beta_nx_n-\beta_{n-1}x_{n-1}}+ 5N|\lambda_n-\lambda_{n-1}|.
\end{split}
\end{equation*}
For $n \geq 1$, we have
\begin{equation*}
\begin{split}
\norm{x_{n+1}-x_n}&=\norm{\beta_n(x_n-x_{n-1})+(\beta_n-\beta_{n-1})x_{n-1}}+5N|\lambda_n-\lambda_{n-1}|\\
& \leq \beta_n\norm{x_n-x_{n-1}}+2N|\beta_n-\beta_{n-1}|+5N|\lambda_n-\lambda_{n-1}|.
\end{split}
\end{equation*}
Observe that the function $G$ is a Cauchy rate for $\left(\sum \gamma_n\right)$, where $(\gamma_n)$ is given by $\gamma_n:=2N|\beta_n-\beta_{n-1}|+5N|\lambda_n-\lambda_{n-1}|$. Indeed, for all $n,k$ we have 
\begin{equation*}
\begin{split}
\sum_{i=G(k)+1}^{G(k)+n} \gamma_i &= 2N \sum_{i=G(k)+1}^{G(k)+n} |\beta_i-\beta_{i-1}|+5N \sum_{i=G(k)+1}^{G(k)+n} |\lambda_i -\lambda_{i-1}|\\
& \leq 2N \sum_{i=B(4N(k+1)-1)+1}^{G(k)+n} |\beta_i-\beta_{i-1}|+5N \sum_{i=L(10N(k+1)-1)+1}^{G(k)+n} |\lambda_i -\lambda_{i-1}|\\
& \leq \frac{2N}{4N(k+1)} + \frac{5N}{10N(k+1)} =\frac{1}{k+1}.
\end{split}
\end{equation*}
Applying Lemma~\ref{LemmaLLPP} with $d=2N$, and for all $n \geq 1$, $s_n=\norm{x_n-x_{n-1}}$, $r_n= 0$, $\gamma_n=2N|\beta_n-\beta_{n-1}|+5N|\lambda_n-\lambda_{n-1}|$ and $\alpha_n=1-\beta_n$ we conclude Part $(i)$.
%$$\forall k \in \N\, \forall n \geq \nu_1(k) \left( \norm{x_{n+1}-x_n}\leq \frac{1}{k+1}\right).$$
As for Part $(ii)$, observe that for all $n \in \N$ 
\begin{equation*}
\begin{split}
\norm{x_n -T(x_n)} &\leq \norm{x_{n+1}-x_n}+\norm{(1-\lambda_n)(\beta_nx_n-T(x_n))+\lambda_{n}(T(\beta_{n}x_{n})-T(x_n))}\\
& \leq \norm{x_{n+1}-x_n}+(1-\lambda_n)\norm{\beta_nx_n-T(x_n)}+\lambda_{n}\norm{\beta_{n}x_{n}-x_n}\\
& \leq \norm{x_{n+1}-x_n}+(1-\lambda_n)\norm{\beta_nx_n-\beta_nT(x_n)}\\
&\quad +(1-\lambda_n)\norm{\beta_{n}T(x_{n})-T(x_n)}+\lambda_n(1-\beta_n)\norm{x_n}\\
& \leq \norm{x_{n+1}-x_n}+(1-\lambda_n)\norm{x_n-T(x_n)}\\
& \quad+(1-\lambda_n)(1-\beta_n)\norm{T(x_{n})}+\lambda_n(1-\beta_n)\norm{x_n}.
\end{split}
\end{equation*}
Hence, for $n \geq \nu_2(k)$ 
\begin{equation*}
\begin{split}
\norm{x_n -T(x_n)} &\leq \frac{1}{\lambda_n}\left(\norm{x_{n+1}-x_n}+(1-\lambda_n)(1-\beta_n)\norm{T(x_{n})}+\lambda_n(1-\beta_n)\norm{x_n}\right)\\
& \leq \frac{1}{\lambda_n}\left(\norm{x_{n+1}-x_n}+2N(1-\lambda_n)(1-\beta_n)+2N\lambda_n(1-\beta_n)\right)\\
& \leq \ell \left(\norm{x_{n+1}-x_n}+ 2N(1-\beta_n)\right) \leq \frac{1}{k+1},
\end{split}
\end{equation*}
which shows Part $(ii)$.
\end{proof}

The next result provides quantitative information on the fact that, with $\tilde{x}=\proj_{\Fix T}(0)$, one has $\limsup \langle \tilde{x},\tilde{x}-x_n\rangle\leq 0$. Notice that, unlike the original proof, the quantitative version below does not require sequential weak compactness -- the elimination of this principle is justified in \cite{FFLLPP(19)}. 
\begin{lemma}\label{l:weak_compact}
 Let $N \in \N \setminus\{0\}$ be such that $N\geq \max\{\norm{x_0-p},\norm{p}\}$, for some fixed point $p$. Consider $\ell \in \N \setminus\{0\}$ and monotone functions $b,D,B,L:\N \to \N$ satisfying conditions $(Q_{\ref{Q2}})-(Q_{\ref{Q6}})$.
 For any $k\in \N$ and monotone function $f:\N \to \N $, there are $n \leq \psi(k,f)$ and $x\in B_{2N}$ such that
 $$\|T(x)-x \| \leq \frac{1}{f(n)+1} \, \land \, \forall m \geq n 
 \big( \langle x,x-x_m\rangle \leq \frac{1}{k+1}\big),$$ 
where $\psi(k,f):=\psi[N,\ell,b,D,B,L](k,f):=\nu_2(48N(\check{g}^{(R)}(0)+1)^2)$, $g(m):=f(\nu_2(m))$, $R:=64N^4(k+1)^2$ and $\check{g}:=\max\{ g(48N(m+1)^2),\, 48N(m+1)^2 \}$, with $\nu_2$ as in Lemma~\ref{l:asymptotic_regularity}.	
	 
\begin{proof}
Given $k \in \N$ and monotone $f:\N \to \N$, applying Lemma~\ref{l:projection} to $k$ and the function $g$ one obtains $n_0 \leq 48N(\check{g}^{(R)}(0)+1)^2$ and $x \in B_{2N}$ such that
$\norm{T(x)-x} \leq \frac{1}{g(n_0)+1}$ and for all $y \in B_{2N}$ 
\begin{equation}\label{e:weak_compactness}
	 \|T(y)-y\|\leq \frac{1}{n_0+1} \to \langle x,x-y\rangle \leq \frac{1}{k+1}.
\end{equation}
 Define $n:=\nu_2(n_0)$. By monotonicity, $n\leq \psi(k,f)$ and by the definition of $g$ we conclude that $\norm{T(x)-x}\leq \frac{1}{f(n)+1}$. By Part $(ii)$ of Lemma~\ref{l:asymptotic_regularity} we have $\norm{T(x_m)-x_m} \leq \frac{1}{n_0+1}$, for all $m \geq n$. The result follows from  \eqref{e:weak_compactness} since $(x_n) \subset B_{2N}$.
\end{proof}
\end{lemma}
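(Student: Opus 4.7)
The plan is to combine the quantitative projection argument of Lemma~\ref{l:projection} with the asymptotic regularity rate $\nu_2$ from Lemma~\ref{l:asymptotic_regularity}(ii). Since $(x_n)$ and its $T$-images lie in $B_{2N}$ by the preceding boundedness lemma, I would apply Lemma~\ref{l:projection} with $2N$ in place of $N$; this already accounts for the constant $48N=24\cdot 2N$ in $\psi$ and the factor $64N^4=4(2N)^4$ in $R$.

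The key idea is to feed Lemma~\ref{l:projection} not with $f$ itself but with an auxiliary function $g$ chosen so that, after the witness is transported through $\nu_2$, the approximate fixed-point tolerance matches the one required in the conclusion. The correct choice is $g(m):=f(\nu_2(m))$, which is monotone because $f$ and $\nu_2$ are. Lemma~\ref{l:projection} applied to $k$ and $g$ then yields some $n_0\leq 48N(\check{g}^{(R)}(0)+1)^2$ and a point $x\in B_{2N}$ satisfying $\|T(x)-x\|\leq \frac{1}{g(n_0)+1}$ together with the implication
\[
\forall y\in B_{2N}\,\Bigl(\|T(y)-y\|\leq \tfrac{1}{n_0+1}\to \langle x, x-y\rangle\leq \tfrac{1}{k+1}\Bigr).
\]

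Next I would set $n:=\nu_2(n_0)$, so that $n\leq \psi(k,f)$ by monotonicity of $\nu_2$. By the very definition of $g$, one has $g(n_0)=f(n)$, giving immediately the first conclusion $\|T(x)-x\|\leq \frac{1}{f(n)+1}$. For the second conclusion, Lemma~\ref{l:asymptotic_regularity}(ii) provides $\|T(x_m)-x_m\|\leq \frac{1}{n_0+1}$ for every $m\geq n$; since $x_m\in B_{2N}$, instantiating the implication above with $y=x_m$ yields $\langle x, x-x_m\rangle\leq \frac{1}{k+1}$ for all such $m$.

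The only delicate point is the bookkeeping needed to align the two quantitative principles: one must verify that $g=f\circ \nu_2$ is monotone (so that $\check{g}$ and its iterates $\check{g}^{(R)}$ make sense in Lemma~\ref{l:projection}), and that the final bound $\psi(k,f)$ is itself monotone in its arguments for downstream use. Once these are in place, the argument is a direct composition and avoids sequential weak compactness entirely, in line with the approach advertised in the introduction.
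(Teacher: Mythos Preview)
Your proposal is correct and follows essentially the same route as the paper's proof: apply Lemma~\ref{l:projection} with $2N$ in place of $N$ (which is permissible since $2N\geq 2\norm{p}$) and with the auxiliary function $g=f\circ\nu_2$, obtain $n_0$ and $x$, set $n:=\nu_2(n_0)$, and then instantiate the resulting implication with $y=x_m$ using the asymptotic regularity rate $\nu_2$ from Lemma~\ref{l:asymptotic_regularity}(ii). Your additional remarks on the monotonicity of $g$ and on the origin of the constants $48N$ and $64N^4$ are accurate and make explicit what the paper leaves implicit.
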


We are now able to prove our main result.

\begin{theorem}\label{t:main}
Given $(\beta_n), (\lambda_n) \subset (0,1]$, $x_0 \in H$ and a nonexpansive mapping $T$, let $(x_n)$ be generated by \eqref{mKM}. Assume that there exist $\ell \in \N\setminus \{0\}$ and monotone functions $b, D,B,L: \N \to \N$ and $h: \N\to \N\setminus \{0\}$ such that the conditions $(Q_{\ref{Q1}})-(Q_{\ref{Q6}})$ hold. Assume that there exists $N \in \N \setminus\{0\}$ such that $N \geq \max\{\norm{x_0-p},\norm{p}\}$, for some $p \in \Fix T$.
Then for all $k \in \N$ and monotone function $f:\N\to \N$
\begin{equation*}
\exists n \leq \mu(k,f) \, \forall i,j \in [n,f(n)] (\norm{x_i-x_j}\leq \frac{1}{k+1} ),
\end{equation*}
where $\mu(k,f):=\mu[N,\ell,b,h,D,B,L](k,f):=\sigma(\widetilde{f},\max\{\psi(12\widetilde{k}+1)-1,\widetilde{f}),n_1\})$, with $\widetilde{k}:=4(k+1)^2-1$, $n_1:=b(54N^2(\widetilde{k}+1)-1)$, $\widetilde{f}(m):=\widetilde{f}[k,N,f,h](m):=3(10N+1)(\widetilde{k}+1)(\overline{f}(m)+1)h(\overline{f}(m))-1
$, for $\overline{f}(m):=\overline{f}[k,N,b,f,D](m):=f(\sigma(\widetilde{k},\max\{m,n_1\}))$, $\psi$ as in Lemma~\ref{l:weak_compact} and $\sigma:=\sigma[D,9N^2]$ as in Lemma~\ref{l:Xu2}.
\end{theorem}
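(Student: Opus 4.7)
The plan is to follow the qualitative proof of Bo\c{t} \emph{et al.}~in a quantitative register: invoke Lemma~\ref{l:weak_compact} to witness the inequality $\limsup\langle \tilde{x}, \tilde{x} - x_n\rangle \leq 0$ (with $\tilde{x} = \proj_{\Fix T}(0)$) and then use the quantitative Xu-type Lemma~\ref{l:Xu2} to drive $s_n := \|x_n - \tilde{x}\|^2$ to zero. The rate $\mu$ is built by composing these two routines. First I call Lemma~\ref{l:weak_compact} on $(12\widetilde{k}+1,\widetilde{f})$ to produce $n_0\le\psi(12\widetilde{k}+1,\widetilde{f})$ and $x\in B_{2N}$ for which $\|T(x)-x\|$ is extremely small and $\langle x,x-x_m\rangle$ is small for every $m\ge n_0$; then I apply $\sigma$ from Lemma~\ref{l:Xu2} at parameter $\widetilde{k}$ starting from the index $\max\{n_0,n_1\}$, where $n_1:=b(54N^2(\widetilde{k}+1)-1)$ comes from $(Q_{\ref{Q2}})$ so as to guarantee $9N^2(1-\beta_n)\le 1/(6(\widetilde{k}+1))$ throughout the relevant range.

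For the key recursion I would set $y_n := \beta_n x_n$, write $x_{n+1}-x=(1-\lambda_n)(y_n-x)+\lambda_n(T(y_n)-x)$, and apply convexity of $\|\cdot\|^2$ together with nonexpansivity of $T$. The estimate $\|T(y_n)-x\|^2 \le \|y_n-x\|^2+10N\|T(x)-x\|+\|T(x)-x\|^2$ uses $\|y_n-x\|\le 5N$, and the identity
\[
\|y_n-x\|^2=\beta_n^2\|x_n-x\|^2+2\beta_n(1-\beta_n)\langle x,x-x_n\rangle+(1-\beta_n)^2\|x\|^2
\]
isolates the strong-convergence cross term. Combined with $\beta_n^2\le\beta_n$, this yields a recursion of the exact form
\[
s_{n+1}\le(1-\alpha_n)(s_n+v_n)+\alpha_n r_n
\]
required by Lemma~\ref{l:Xu2}, with $\alpha_n:=1-\beta_n$, $r_n:=2\beta_n\langle x,x-x_n\rangle+(1-\beta_n)\|x\|^2$, and $v_n:=\tfrac{\lambda_n}{\beta_n}(10N+1)\|T(x)-x\|$ (using $\|T(x)-x\|\le 1$, as ensured by $\widetilde{f}$ and $(Q_{\ref{Q1}})$ via $1/\beta_n\le h(n)$).

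Verifying the hypotheses of Lemma~\ref{l:Xu2} on the range $[\max\{n_0,n_1\},\overline{f}(n_0)]$ is then routine: the estimate $r_n\le 1/(3(\widetilde{k}+1))$ follows by splitting into the two summands, one controlled by Lemma~\ref{l:weak_compact} and the other by the choice of $n_1$, while $v_n\le h(\overline{f}(n_0))(10N+1)/(\widetilde{f}(n_0)+1)=1/(3(\widetilde{k}+1)(\overline{f}(n_0)+1))$ holds by the very definition of $\widetilde{f}$. Applying Lemma~\ref{l:Xu2} with $d=9N^2$ (an upper bound on $s_n$, since $\|x_n-x\|\le 3N$) and divergence rate $A=D$ from $(Q_{\ref{Q3}})$ yields $s_i\le 1/(\widetilde{k}+1)=1/(4(k+1)^2)$ on $[\sigma(\widetilde{k},\max\{n_0,n_1\}),\overline{f}(n_0)]$, and a triangle inequality converts this to $\|x_i-x_j\|\le 1/(k+1)$ on $[n,f(n)]$ for $n:=\sigma(\widetilde{k},\max\{n_0,n_1\})\le\mu(k,f)$, noting that $f(n)\le\overline{f}(n_0)$ by construction of $\overline{f}$.

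The main difficulty is organizational rather than conceptual: $\widetilde{f}$ depends on $\overline{f}$, which itself depends on $\sigma$ and $f$, so one must verify that this chain of definitions is well-founded and that the composite bound on $\|T(x)-x\|$ is tight enough to dominate the product $3(10N+1)(\widetilde{k}+1)(\overline{f}(n_0)+1)h(\overline{f}(n_0))$ extracted from Lemma~\ref{l:Xu2}'s $v_n$-hypothesis. The arithmetic balancing of the constants (the factors $12$, $54N^2$, $10N+1$, and the square root $\sqrt{4(k+1)^2}=2(k+1)$ used in the final triangle step) is where the complexity of $\mu$ originates; once the recursion is in Xu-lemma shape, the parameters $12\widetilde{k}+1$, $n_1$, and $\widetilde{f}$ can be read off directly from these constraints.
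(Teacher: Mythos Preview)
Your proposal is correct and follows essentially the same route as the paper's proof: invoke Lemma~\ref{l:weak_compact} with the carefully engineered pair $(12(\widetilde{k}+1)-1,\widetilde{f})$, derive a recursion $s_{n+1}\le(1-\alpha_n)(s_n+v_n)+\alpha_n r_n$ for $s_n=\|x_n-x\|^2$ with the same $\alpha_n=1-\beta_n$, $r_n=2\beta_n\langle x,x-x_n\rangle+(1-\beta_n)\|x\|^2$, and an error term $v_n$ controlled by $h(n)(10N+1)\|T(x)-x\|$, then feed this into Lemma~\ref{l:Xu2} with $d=9N^2$ and $A=D$. The only cosmetic difference is that the paper obtains the recursion by first bounding $\|x_{n+1}-x\|\le\|\beta_nx_n-x\|+\|T(x)-x\|$ and squaring (giving $w_n=\|T(x)-x\|(2\|\beta_nx_n-x\|+\|T(x)-x\|)$ and $v_n=h(n)w_n$), whereas you use convexity of $\|\cdot\|^2$ and then estimate $\|T(y_n)-x\|^2$; both routes collapse to the same bound $v_n\le h(n)(10N+1)\|T(x)-x\|$ and the verification of the hypotheses of Lemma~\ref{l:Xu2} is identical.
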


\begin{proof}
Let $k \in \N$ and $f:\N \to \N$ monotone be given. With $\widetilde{k}:=4(k+1)^2-1$ and $n_1:=b(54N^2(\widetilde{k}+1)-1)$  we define the functions
\begin{equation*}
\overline{f}(m):=f(\sigma(\widetilde{k},\max\{m,n_1\}))
\end{equation*}
and 
\begin{equation*}
\widetilde{f}(m):=3(10N+1)(\widetilde{k}+1)(\overline{f}(m)+1)h(\overline{f}(m))-1.
\end{equation*}
By Lemma~\ref{l:weak_compact} there exist $n_0 \leq \psi(12(\widetilde{k}+1)-1,\widetilde{f})$ and $x \in B_{2N}$ such that $\norm{T(x)-x}\leq \dfrac{1}{\widetilde{f}(n_0)+1}$ and 
\begin{equation}\label{e:apply_proj}
 \langle x,x-x_m\rangle \leq \frac{1}{12(\widetilde{k}+1)}, \text{for all } m \geq n_0.
\end{equation}
We have, for all $n \in \N$
\begin{equation*}
\begin{split}
\norm{x_{n+1}-x}
%=\norm{(1-\lambda_n)(\beta_nx_n-x)+\lambda_{n}(T(\beta_{n}x_{n})-x)}\\
%& \leq \norm{(1-\lambda_n)(\beta_nx_n-x)+\lambda_{n}(T(\beta_{n}x_{n})-Tx)}+\lambda_n\norm{T(x)-x}\\
& \leq (1-\lambda_n)\norm{\beta_nx_n-x}+\lambda_{n}\norm{T(\beta_{n}x_{n})-T(x)}+\lambda_n\norm{T(x)-x}\\
& \leq \norm{\beta_nx_n-x}+\norm{T(x)-x}.
\end{split}
\end{equation*}
Let $w_n:=\norm{T(x)-x}(2\norm{\beta_nx_n-x}+\norm{T(x)-x})$. Using $(Q_{\ref{Q1}})$, for all $n \in \N$
\begin{equation*}
\begin{split}
\norm{x_{n+1}-x}^2& \leq (\norm{\beta_nx_n-x}+\norm{T(x)-x})^2\\
& =\norm{\beta_nx_n-x}^2+ w_n\\
& =\norm{\beta_n(x_n-x)+(\beta_n-1)x}^2+ w_n\\
& = \beta_n^2\norm{x_n-x}^2+2\beta_n(1-\beta_n)\langle x,x-x_n\rangle+(1-\beta_n)^2\norm{x}^2+ w_n\\
& \leq \beta_n\norm{x_n-x}^2+(1-\beta_n)(2\beta_n\langle x,x-x_n\rangle+(1-\beta_n)\norm{x}^2)+ w_n\\
& \leq \beta_n(\norm{x_n-x}^2+\frac{w_n}{\beta_n})+(1-\beta_n)(2\beta_n\langle x,x-x_n\rangle+(1-\beta_n)\norm{x}^2)\\
& \leq \beta_n(\norm{x_n-x}^2+h(n)w_n)+(1-\beta_n)(2\beta_n\langle x,x-x_n\rangle+(1-\beta_n)\norm{x}^2)
\end{split}
\end{equation*}
We will apply Lemma~\ref{l:Xu2} with $s_n:=\norm{x_n-x}^2$, $v_n:=h(n)w_n$, $\alpha_n:=1-\beta_n$, $r_n:=2\beta_n\langle x,x-x_n\rangle+(1-\beta_n)\norm{x}^2$, $A$ is instantiated with the function $D$ and $d:=9N^2$.  Let us see that the conditions of the lemma are satisfied.
By $(Q_{\ref{Q3}})$, it is clear that $A$ satisfies the required condition.
We have that $$s_n =\norm{x_n-x}^2 \leq (\norm{x_n-p}+\norm{x-p})^2\leq (N+2N)^2=d.$$
With $n:=\max\{n_0,n_1\}$ by \eqref{e:apply_proj} and $(Q_{\ref{Q2}})$, for all $m \geq n$, 
\begin{equation*}
r_m=2\beta_m\langle x,x-x_m\rangle+(1-\beta_m)\norm{x}^2 \leq \frac{2}{12(\widetilde{k}+1)}+ \frac{(\norm{x-p}+\norm{p})^2}{54N^2(\widetilde{k}+1)}\leq \frac{1}{3(\widetilde{k}+1)}.
\end{equation*} 
Observe that $2\norm{\beta_nx_n-x}+\norm{T(x)-x} \leq 2(\norm{x_n}+\norm{x-p}+\norm{p})+1\leq 10N+1.$
For $m \leq \overline{f}(n_0)=f(\sigma(\widetilde{k},n))$, using the fact that the function $h$ is monotone we have
\begin{equation*}
\begin{split}
v_m&=h(m)\norm{T(x)-x}(2\norm{\beta_mx_m-x}+\norm{T(x)-x}) \leq \frac{h(\overline{f}(n_0))(10N+1)}{\widetilde{f}(n_0)+1}\\
& =\frac{h(\overline{f}(n_0))(10N+1)}{3(10N+1)(\widetilde{k}+1)(\overline{f}(n_0)+1)h(\overline{f}(n_0))}= \frac{1}{3(\widetilde{k}+1)(f(\sigma(\widetilde{k},n))+1)}
\end{split}
\end{equation*}
This shows that  $v_m\leq \frac{1}{3(\widetilde{k}+1)(f(\sigma(\widetilde{k},n))+1)}
$ and $r_m \leq \frac{1}{3(\widetilde{k}+1)}$, for all $m \in [n,f(\sigma(\widetilde{k},n))]$. 
 Hence, by Lemma~\ref{l:Xu2}
\begin{equation}\label{e:quasi_convergence}
\forall m \in [\sigma(\widetilde{k},n),f(\sigma(\widetilde{k},n))] \left(\norm{x_m-x}^2\leq \frac{1}{\widetilde{k}+1}\right).
\end{equation} 
We conclude that for $i,j \in [\sigma(\widetilde{k},n),f(\sigma(\widetilde{k},n))]$
\begin{equation*}
\norm{x_i-x_j}\leq \norm{x_i-x}+\norm{x_j-x}\leq \frac{1}{2(k+1)}+\frac{1}{2(k+1)}=\frac{1}{k+1},
\end{equation*}
which entails the result since $\sigma(\widetilde{k},n)\leq \mu(k,f)$.
\end{proof}

\begin{remark}
The proof of metastability result above does not require the projection argument neither sequential weak compactness. Nevertheless, in the presence of the projection point $\proj_{\Fix T}(0)$, following the arguments culminating in \eqref{e:quasi_convergence} one can say that Theorem~\ref{t:main} corresponds to a quantitative version of Theorem~\ref{t:original}.
\end{remark}

\begin{corollary}\label{c:Cor1}
Let $\alpha \in (0,1]$ and $T:H \to H$ be $\alpha$-averaged. Given $(\beta_n) \subset (0,1]$, $(\lambda_n) \subset (0, \frac{1}{\alpha}]$ and $x_0 \in H$, consider $(x_n)$ generated by \eqref{mKM}. Let $a \in \N \setminus\{0\}$ be such that $\alpha\geq \frac{1}{a}$. Assume that there exist $\ell \in \N\setminus \{0\}$ and monotone functions $b, D,B,L: \N \to \N$ and $h: \N\to \N\setminus \{0\}$ such that the conditions $(Q_{\ref{Q1}})-(Q_{\ref{Q6}})$ hold. Assume that there exists $N \in \N \setminus\{0\}$ such that $N \geq \max\{\norm{x_0-p},\norm{p}\}$, for some $p \in \Fix T$.
Then for all $k \in \N$ and monotone function $f:\N\to \N$
\begin{equation*}
\exists n \leq \mu_1(k,f) \, \forall i,j \in [n,f(n)] \,(\norm{x_i-x_j}\leq \frac{1}{k+1}),
\end{equation*}
 where $\mu_1(k,f):=\mu_1[a,N,\ell,b,h,D,B,L](k,f):=\mu[N,a\ell,b,h,D,B,L](k,f)$, with $\mu$ as in Theorem~\ref{t:main}.
\end{corollary}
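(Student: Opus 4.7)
The plan is to reduce Corollary~\ref{c:Cor1} to Theorem~\ref{t:main} by a simple reparametrisation that exploits the $\alpha$-averagedness of $T$. Writing $T=(1-\alpha)\Id+\alpha T'$ for some nonexpansive $T'$, I compute $T(\beta_n x_n)-\beta_n x_n=\alpha\bigl(T'(\beta_n x_n)-\beta_n x_n\bigr)$, so that \eqref{mKM} for $T$ rewrites as the same scheme for $T'$ with step-sizes $\lambda'_n:=\alpha\lambda_n$. Because $\lambda_n\in(0,1/\alpha]$, the rescaled sequence satisfies $(\lambda'_n)\subset(0,1]$, so this is a valid instance of \eqref{mKM} in the sense of Theorem~\ref{t:main}. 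Since $\alpha>0$, the identity $T(p)=p\Leftrightarrow T'(p)=p$ gives $\Fix T'=\Fix T$, so the fixed point $p$ and the bound $N$ carry over unchanged.

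Next I would verify that the quantitative hypotheses $(Q_{\ref{Q1}})$--$(Q_{\ref{Q6}})$ transfer to $T'$ and to the rescaled step-sizes $(\lambda'_n)$. Conditions $(Q_{\ref{Q1}})$--$(Q_{\ref{Q4}})$ only involve $(\beta_n)$ and are inherited verbatim with the same $h,b,D,B$. For $(Q_{\ref{Q5}})$, the inequality $\lambda'_n=\alpha\lambda_n\geq \frac{1}{a\ell}$ follows from $\alpha\geq\frac{1}{a}$ and $\lambda_n\geq\frac{1}{\ell}$, so the integer $\ell$ is simply replaced by $a\ell$. For $(Q_{\ref{Q6}})$, since $\alpha\leq 1$,
\begin{equation*}
\sum_{i=L(k)+1}^{L(k)+n}|\lambda'_i-\lambda'_{i-1}|=\alpha\sum_{i=L(k)+1}^{L(k)+n}|\lambda_i-\lambda_{i-1}|\leq \frac{1}{k+1},
\end{equation*}
and so the same $L$ still serves as a Cauchy rate for $\bigl(\sum|\lambda'_i-\lambda'_{i-1}|\bigr)$.

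Having verified all hypotheses with the data $(N,a\ell,b,h,D,B,L)$, I would apply Theorem~\ref{t:main} to the sequence $(x_n)$ viewed as being generated by \eqref{mKM} for the nonexpansive map $T'$ with step-sizes $(\lambda'_n)$. This yields, for every $k\in\N$ and monotone $f:\N\to\N$, some $n\leq \mu[N,a\ell,b,h,D,B,L](k,f)=\mu_1(k,f)$ with $\norm{x_i-x_j}\leq \frac{1}{k+1}$ for all $i,j\in[n,f(n)]$, which is the desired conclusion. The argument is essentially bookkeeping; the only mildly delicate point is noticing that $(Q_{\ref{Q6}})$ survives the rescaling without any modification to $L$, precisely because $\alpha\leq 1$ makes the total variation of $(\lambda'_n)$ bounded by that of $(\lambda_n)$.
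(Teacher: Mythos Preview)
Your proposal is correct and follows essentially the same approach as the paper's proof: decompose $T=(1-\alpha)\Id+\alpha T'$, observe that $(x_n)$ is generated by \eqref{mKM} for $T'$ with rescaled step-sizes $\alpha\lambda_n\in(0,1]$, note $\Fix T=\Fix T'$, and then verify that $(Q_{\ref{Q5}})$ holds with $a\ell$ in place of $\ell$ while $(Q_{\ref{Q6}})$ survives with the same $L$ because $\alpha\leq 1$. Your write-up is slightly more explicit (you spell out the identity $T(\beta_nx_n)-\beta_nx_n=\alpha(T'(\beta_nx_n)-\beta_nx_n)$ and remark that $(Q_{\ref{Q1}})$--$(Q_{\ref{Q4}})$ are untouched), but the argument is the same.
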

\begin{proof}
Since $T$ is $\alpha$-averaged, there exists $T'$ nonexpansive and such that $T=(1-\alpha)\Id+\alpha T'$. It is easy to see that $(x_n)$ is generated by \eqref{mKM}, with starting point $x_0$, using the sequences $(\beta_n),(\alpha\lambda_n) \subset (0,1]$ and $T'$. Since $\Fix T = \Fix T'$, by Theorem~\ref{t:main}, we just need to check that the conditions $(Q_{\ref{Q5}})$ and $(Q_{\ref{Q6}})$ still hold using the sequence $(\alpha\lambda_n)$ instead of the sequence $(\lambda_n)$. The condition $(Q_{\ref{Q5}})$ holds with $\ell:=a\ell$, since for all $n \in \N$ we have $\alpha \lambda_n\geq \frac{\alpha}{\ell}\geq \frac{1}{a\ell}$. We have, for all $k,n \in \N$ that since $\alpha \in (0,1]$ $$\sum_{i=L(k)+1}^{L(k)+n}|\alpha\lambda_{i}-\alpha\lambda_{i-1}|\leq \sum_{i=L(k)+1}^{L(k)+n}|\lambda_{i}-\lambda_{i-1}|\leq\frac{1}{k+1}.$$ Hence $(Q_{\ref{Q6}})$ is also satisfied for the sequence $(\alpha\lambda_n)$ with the function $L$.
\end{proof}

\begin{corollary}\label{c:Cor2}
Let $T_1: H \rightrightarrows H$ be maximal monotone  and $T_2:H \to H$ be $\delta$-cocoercive, for some $\delta >0$. Let $\gamma \in (0,2\delta]$. Given $(\beta_n) \subset (0,1]$, $(\lambda_n) \subset (0, \frac{4\delta-\gamma}{2\delta}]$ and $x_0 \in H$, consider $(x_n)$ generated, for all $n \in \N$, by 
\begin{equation}\label{TFB}\tag{$\mathsf{T}\textup{-}\mathsf{FB}$}
x_{n+1}=(1-\lambda_n)\beta_nx_n+\lambda_nJ_{\gamma T_1}(\beta_nx_n-\gamma T_2(\beta_nx_n))
\end{equation}
 Assume that there exist $\ell \in \N\setminus \{0\}$ and monotone functions $b, D,B,L: \N \to \N$ and $h: \N\to \N\setminus \{0\}$ such that the conditions $(Q_{\ref{Q1}})-(Q_{\ref{Q6}})$ hold. Assume that there exists $N \in \N \setminus\{0\}$ such that $N \geq \max\{\norm{x_0-p},\norm{p}\}$, for some $p \in zer (T_1+T_2)$.
Then for all $k \in \N$ and monotone function $f:\N\to \N$
\begin{equation*}
\exists n \leq \mu_2(k,f) \, \forall i,j \in [n,f(n)] (\norm{x_i-x_j}\leq \frac{1}{k+1}),
\end{equation*}
 where $\mu_2(k,f):=\mu_2[N,\ell,b,h,D,B,L](k,f):=\mu[N,2\ell,b,h,D,B,L](k,f)$, with $\mu$ as in Theorem~\ref{t:main}.
\end{corollary}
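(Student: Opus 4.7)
\medskip

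\noindent\textbf{Proof proposal for Corollary~\ref{c:Cor2}.} The plan is to reduce Corollary~\ref{c:Cor2} to Corollary~\ref{c:Cor1} by exhibiting the forward-backward operator as an averaged nonexpansive map. More precisely, I would set
\[
T := J_{\gamma T_1}\circ(\Id-\gamma T_2),
\]
and observe that the iteration \eqref{TFB} can then be rewritten as
\[
x_{n+1}=(1-\lambda_n)\beta_nx_n+\lambda_n T(\beta_n x_n)=\beta_n x_n+\lambda_n(T(\beta_n x_n)-\beta_n x_n),
\]
which is exactly \eqref{mKM} with this particular $T$. Moreover, the standard identity $\Fix T=zer(T_1+T_2)$ (see \cite{BC(11)}) guarantees that the assumption $N\geq\max\{\norm{x_0-p},\norm{p}\}$ for some $p\in zer(T_1+T_2)$ directly supplies a corresponding fixed point of $T$.

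The key quantitative ingredient is that under the assumption that $T_2$ is $\delta$-cocoercive with $\gamma\in(0,2\delta]$, the composition $T=J_{\gamma T_1}\circ(\Id-\gamma T_2)$ is $\alpha$-averaged for $\alpha:=\frac{2\delta}{4\delta-\gamma}$ (this is a well-known averagedness computation; see \cite{BC(11)}). Since $\gamma\in(0,2\delta]$, one has $\alpha\in(\tfrac{1}{2},1]$, so $\alpha\geq\tfrac{1}{2}$ and we may take $a:=2$. The constraint $\lambda_n\in(0,\tfrac{4\delta-\gamma}{2\delta}]=(0,\tfrac{1}{\alpha}]$ then matches precisely the hypothesis of Corollary~\ref{c:Cor1}.

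Having verified the setup, I would simply invoke Corollary~\ref{c:Cor1} with these data and with the same quantitative parameters $\ell$, $b$, $h$, $D$, $B$, $L$, noting that the conditions $(Q_{\ref{Q1}})$--$(Q_{\ref{Q6}})$ are hypotheses on the sequences $(\beta_n)$ and $(\lambda_n)$ alone and therefore transfer without change. The resulting metastability rate is
\[
\mu_1[a,N,\ell,b,h,D,B,L](k,f)=\mu[N,a\ell,b,h,D,B,L](k,f)=\mu[N,2\ell,b,h,D,B,L](k,f),
\]
which is exactly the claimed $\mu_2(k,f)$.

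The only potentially delicate point is the averagedness constant $\alpha$: one must confirm that $\frac{2\delta}{4\delta-\gamma}\in(0,1]$ and is bounded below by $\frac{1}{2}$, which is immediate from $\gamma\in(0,2\delta]$. Everything else is a purely formal translation between the forward-backward form and the modified Krasnosel'ski\u{\i}-Mann form, so no new quantitative work beyond Corollary~\ref{c:Cor1} is required.
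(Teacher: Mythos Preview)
Your proposal is correct and follows essentially the same route as the paper: rewrite \eqref{TFB} as \eqref{mKM} with $T=J_{\gamma T_1}\circ(\Id-\gamma T_2)$, identify $\Fix T=zer(T_1+T_2)$, compute the averagedness constant $\alpha=\tfrac{2\delta}{4\delta-\gamma}\in[\tfrac12,1]$, and invoke Corollary~\ref{c:Cor1} with $a=2$. The paper merely spells out the averagedness computation in slightly more detail (citing the specific results for the two factors and treating the boundary case $\gamma=2\delta$ separately), but the argument is the same.
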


\begin{proof}
It is straightforward to see that the iteration $(x_n)$ is generated by \eqref{mKM} with $T:=J_{\gamma T_1} \circ (\Id -\gamma T_2)$. The resolvent function $J_{\gamma T_1}$ is firmly nonexpansive, i.e.\ $\frac{1}{2}$-averaged. Furthermore, we have $\frac{\gamma}{2\delta} \in (0,1]$ and, using \cite[Proposition~4.33]{BC(11)} it follows that $(\Id -\gamma T_2)$ is $\frac{\gamma}{2\delta}$-averaged. If $\gamma < 2\delta$, then using \cite[Theorem~3(b)]{OY(02)} it follows that $T$ is $\frac{2\delta}{4\delta-\gamma}$-averaged. If $\gamma = 2\delta$, then $T$ is nonexpansive and therefore also $\frac{2\delta}{4\delta-\gamma}$-averaged.
Since $\Fix T=zer(T_1+T_2)$ \cite[Proposition~25.1(iv)]{BC(11)} we have $N \geq \max\{\norm{x_0-p},\norm{p}\}$, for some $p \in \Fix T$. Noting that $\frac{2\delta}{4\delta-\gamma} \in [\frac{1}{2},1]$, we may apply Corollary~\ref{c:Cor1} with $a=2$ and the result follows.
\end{proof}

\begin{remark}
If in Corollary~\ref{c:Cor2} the resolvent function $J_{\gamma T_1}$ is replaced by an arbitrary firmly nonexpansive mapping $T_1$, and $p$ is some point in $\Fix (T_1 \circ (\Id -\gamma T_2))$, then the result holds also for $\gamma=0$, by Corollary~\ref{c:Cor1}. In such case, the sequence $(\lambda_n)$ is allowed to vary in the interval $(0,2]$.
\end{remark}

\begin{corollary}
Let $T_1,T_2: H \rightrightarrows H$ be two maximal monotone operators and $\gamma  >0$. Given $(\beta_n) \subset (0,1]$, $(\lambda_n) \subset (0, 2]$ and $x_0 \in H$, consider $(x_n)$ generated, for all $n \in \N$, by
\begin{equation}\label{TDR}\tag{$\mathsf{T}\textup{-}\mathsf{DR}$}
\begin{cases}
y_n=J_{\gamma T_2}(\beta_nx_n)\\
z_n=J_{\gamma T_1}(2y_n-\beta_nx_n)\\
x_{n+1}=\beta_nx_n+\lambda_n(z_n-y_n) 
\end{cases}
\end{equation}
Assume that there exist $\ell \in \N\setminus \{0\}$ and monotone functions $b, D,B,L: \N \to \N$ and $h: \N\to \N\setminus \{0\}$ such that the conditions $(Q_{\ref{Q1}})-(Q_{\ref{Q6}})$ hold. Assume that there exists $N \in \N \setminus\{0\}$ such that $N \geq \max\{\norm{x_0-p},\norm{p}\}$, for some $p \in \Fix (R_{\gamma T_1} \circ R_{\gamma T_2})$.
Then, for all $k \in \N$ and monotone function $f:\N\to \N$
\begin{enumerate}[$(i)$]
\item $\exists n \leq \mu_3(k,f) \, \forall i,j \in [n,f(n)] \,(\norm{x_i-x_j}\leq \frac{1}{k+1})$,
\item $\exists n \leq \mu_4(k,f) \, \forall i,j \in [n,f(n)] \,(\norm{y_i-y_j}\leq \frac{1}{k+1})$,
\item $\exists n \leq \mu_5(k,f) \, \forall i,j \in [n,f(n)] \,(\norm{z_i-z_j}\leq \frac{1}{k+1})$,
\end{enumerate}
 where $\mu_3(k,f):=\mu_3[N,\ell,b,h,D,B,L](k,f):=\mu[N,2\ell,b,h,D,B,L](k,f)$, $\mu_4(k,f):=\max\{\mu_3(2k+1,g_1),b(8N(k+1)-1)\}$ and\\ $\mu_5(k,f):=\max\{\mu_4(3k+2,g_2),\nu_1(6\ell (k+1)-1), b(12 \ell N(k+1)-1)\}$,\\ with $\mu$ as in Theorem~\ref{t:main},  and $g_1(m):=f(\max\{m,b(8N(k+1)-1)\})$ and $g_2(m):=f(\max\{m,\nu_1(6\ell (k+1)-1), b(12 \ell N(k+1)-1)\})$.
\end{corollary}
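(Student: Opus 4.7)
The plan is to reduce all three claims to results already proved. For (i), recognize \eqref{TDR} as an instance of \eqref{mKM} driven by the Douglas-Rachford operator $T := \frac{1}{2}(\Id + R_{\gamma T_1}\circ R_{\gamma T_2})$, which is $\frac{1}{2}$-averaged and satisfies $\Fix T = \Fix(R_{\gamma T_1}\circ R_{\gamma T_2})$. Using $R_{\gamma T_i}=2J_{\gamma T_i}-\Id$, a short computation gives $R_{\gamma T_2}(\beta_n x_n) = 2y_n-\beta_n x_n$, hence $R_{\gamma T_1}\circ R_{\gamma T_2}(\beta_n x_n) = 2z_n-2y_n+\beta_n x_n$, and therefore $T(\beta_n x_n)-\beta_n x_n = z_n-y_n$. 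The $x$-update thus reads $x_{n+1}=\beta_n x_n + \lambda_n(T(\beta_n x_n)-\beta_n x_n)$, and Corollary~\ref{c:Cor1} applied with $a=2$ yields part $(i)$ with rate $\mu_3(k,f) = \mu[N,2\ell,b,h,D,B,L](k,f)$.

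For (ii), nonexpansiveness of $J_{\gamma T_2}$ gives
\[ \|y_i-y_j\|\leq \|\beta_i x_i-\beta_j x_j\|\leq \|x_i-x_j\|+2N\bigl(|1-\beta_i|+|1-\beta_j|\bigr). \]
For $i,j\geq b(8N(k+1)-1)$ the $\beta$-term is bounded by $1/(2(k+1))$ thanks to $(Q_{\ref{Q2}})$, and for $\|x_i-x_j\|\leq 1/(2(k+1))$ it suffices to invoke part $(i)$ with parameter $2k+1$ and the auxiliary function $g_1(m):=f(\max\{m,b(8N(k+1)-1)\})$. If $n_0\leq \mu_3(2k+1,g_1)$ is the index provided by $(i)$, set $n:=\max\{n_0,b(8N(k+1)-1)\}\leq \mu_4(k,f)$; the definition of $g_1$ forces $[n,f(n)]\subseteq [n_0,g_1(n_0)]$ and both estimates combine to give $\|y_i-y_j\|\leq 1/(k+1)$.

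For (iii), rearranging $x_{n+1}=\beta_n x_n+\lambda_n(z_n-y_n)$ yields $z_n-y_n = ((x_{n+1}-x_n)+(1-\beta_n)x_n)/\lambda_n$, and using $1/\lambda_n\leq \ell$ together with $\|x_n\|\leq 2N$ one obtains
\[ \|z_i-z_j\|\leq \|y_i-y_j\|+\ell\bigl(\|x_{i+1}-x_i\|+\|x_{j+1}-x_j\|\bigr)+2\ell N\bigl(|1-\beta_i|+|1-\beta_j|\bigr). \]
Each of the three summands is driven below $1/(3(k+1))$ separately: Lemma~\ref{l:asymptotic_regularity}$(i)$ at parameter $6\ell(k+1)-1$ controls the $x$-differences; $(Q_{\ref{Q2}})$ at threshold $b(12\ell N(k+1)-1)$ controls the $\beta$-terms; and part $(ii)$ at parameter $3k+2$ with auxiliary function $g_2(m):=f(\max\{m,\nu_1(6\ell(k+1)-1),b(12\ell N(k+1)-1)\})$ controls $\|y_i-y_j\|$. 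Taking $n$ to be the maximum of the three quantities $\mu_4(3k+2,g_2)$, $\nu_1(6\ell(k+1)-1)$, $b(12\ell N(k+1)-1)$, one has $n\leq \mu_5(k,f)$ and the construction of $g_2$ ensures that $[n,f(n)]$ stays inside the metastability interval from $(ii)$, yielding $\|z_i-z_j\|\leq 1/(k+1)$.

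The computations are routine; the only delicate aspect is the bookkeeping of the auxiliary functions $g_1$ and $g_2$, which must be arranged so that the metastability interval produced by an earlier part extends past the thresholds required to control the remaining error terms — the same pattern already used in the proof of Theorem~\ref{t:main}.
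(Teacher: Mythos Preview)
Your proof is correct and follows essentially the same route as the paper. The only cosmetic difference is in part~$(i)$: you view the update as \eqref{mKM} for the $\tfrac{1}{2}$-averaged operator $\tfrac{1}{2}(\Id+R_{\gamma T_1}\circ R_{\gamma T_2})$ with step sizes $(\lambda_n)$ and invoke Corollary~\ref{c:Cor1} with $a=2$, while the paper equivalently writes it as \eqref{mKM} for the nonexpansive map $R_{\gamma T_1}\circ R_{\gamma T_2}$ with step sizes $(\lambda_n/2)$ and applies Theorem~\ref{t:main} directly---both yield the same bound $\mu[N,2\ell,b,h,D,B,L]$, and parts~$(ii)$ and~$(iii)$ match the paper's argument exactly.
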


\begin{proof}
It is easy to see that the iteration $(x_n)$ is generated by \eqref{mKM} using the sequences $(\beta_n)$ and $(\frac{\lambda_n}{2})$, and with $T:=R_{\gamma T_1} \circ R_{\gamma T_2}$, which is nonexpansive. %by \cite[Corollary~23.10(ii)]{BC(11)}.
Hence, applying Theorem~\ref{t:main} we obtain Part (i).

By Part (i), there exists $n_0 \leq \mu_3(2k+1,g_1)$ such that $\norm{x_i-x_j}\leq \frac{1}{2(k+1)}$, for all $i,j \in [n_0, g_1(n_0)]$. Define $n:= \max\{n_0, b(8N(k+1)-1)\}\,(\leq \mu_4(k,f))$. Since $[n,f(n)] \subseteq [n_0,g_1(n_0)]$, we have for $i,j \in [n,f(n)]$  
\begin{equation*}
\begin{split}
\norm{y_i-y_j}&\leq \norm{\beta_ix_i-\beta_j x_j} \leq \norm{x_i-x_j}+|\beta_i-\beta_j|\norm{x_j}\\%\leq \norm{x_i-x_j}+(|\beta_i-1|+|1-\beta_j|)\norm{x_j} \\
& \leq \frac{1}{2(k+1)}+ \left(\frac{1}{8N(k+1)}+\frac{1}{8N(k+1)}\right)2N= \frac{1}{k+1},
\end{split}
\end{equation*}
which shows Part (ii).

Define $n_1:=\max\{\nu_1(6\ell (k+1)-1), b(12 \ell N(k+1)-1)\}$, where $\nu_1$ is as in Lemma~\ref{l:asymptotic_regularity}. 
By the definition of $x_{n+1}$, for $i \geq n_1$
\begin{equation*}
\begin{split}
\norm{z_i-y_i} & \leq \ell \norm{x_{i+1}-\beta_ix_i} \leq \ell (\norm{x_{i+1} -x_i} + 2N(1-\beta_i))\\
& \leq \frac{\ell}{6 \ell (k+1)} + \frac{2 \ell N}{12 \ell N (k+1)}= \frac{1}{3(k+1)}
\end{split}
\end{equation*}
By Part (ii), there exists $n_2 \leq \mu_4(3k+2,g_2)$ such that $\norm{y_i-y_j}\leq \frac{1}{3(k+1)}$, for all $i,j \in [n_2, g_2(n_2)]$.
Define $n:=\max\{n_1,n_2\} \, (\leq \mu_5(k,f))$.  Since $[n,f(n)] \subseteq [n_2,g_2(n_2)]$, we have for $i,j \in [n,f(n)]$
\begin{equation*}
\norm{z_i-z_j}\leq \norm{z_i-y_i}+\norm{y_i-y_j}+\norm{y_j-z_j}\leq \frac{1}{k+1},
\end{equation*}
which concludes the result.
\end{proof}

%\section*{Acknowledgements}

%The paper also benefited from discussions with Fernando Ferreira and Ulrich Kohlenbach.

\bibliography{References}{}
\bibliographystyle{plain}

%\begin{acknowledgements}

%\end{acknowledgements}

% Authors must disclose all relationships or interests that 
% could have direct or potential influence or impart bias on 
% the work: 

\end{document}